\newtheorem{thm}{Theorem}
\newtheorem{rmk}[thm]{Remark}
\theoremstyle{definition}
\def \R {\mathbb R}
\title{Local solution to the $G_{2}-$monopole equation with prescribed tangent cone and $G_{2}-$structure}
\date{\vspace{-5ex}}
\begin{document}
\maketitle Given a  $G_{2}-$structure  $(\phi,\psi)$, on the $G_{2}-$monopole equation
\begin{equation}\label{equ monopole equation}F_{A^{\star}}\wedge \psi+\star_{\phi}(d_{A^{\star}}u)=0\  \textrm{of  a connection}\ A^{\star}\ \textrm{and a bundle-valued}\ 0- \textrm{form}\ u,   \end{equation}
the following theorem is true.
\begin{thm}\label{Thm local solution}Let $B_{O}(R)\subset \R^{7}$ be an arbitrary ball centred at the origin. For any smooth  $G_{2}-$structure $(\phi,\psi)$ defined over $B_{O}(R)$, and any smooth $SO(m)-$bundle $\eta \rightarrow \mathbb{S}^{6}$ equipped with a Hermitian-Yang-Mills connection $A_{0}$, there exists a $G_{2}-$monopole which is defined in a smaller ball and  asymptotic to $A_{0}$ exponentially at $O$.
\end{thm}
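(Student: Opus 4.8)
The plan is a singular-perturbation argument based at the model cone solution, carried out in weighted function spaces adapted to the conical point $O$. I would first fix coordinates on $B_{O}(R)$ so that the underlying metric is in geodesic normal form and $(\phi,\psi)(O)=(\phi_{0},\psi_{0})$, the flat $G_{2}$-structure; then $(\phi,\psi)-(\phi_{0},\psi_{0})=O(r)$, with $r$ the distance to $O$. Identifying $\R^{7}\setminus\{O\}$ with the metric cone $(0,\infty)_{r}\times\mathbb{S}^{6}$ over the nearly-K\"{a}hler round sphere, I would take the approximate monopole to be the $r$-invariant pullback connection $A^{\mathrm{cone}}$ determined by $A_{0}$ (in temporal gauge), with Higgs field $u\equiv 0$. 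The defining feature of the nearly-K\"{a}hler cone is that the pullback of a Hermitian--Yang--Mills connection is an exact $G_{2}$-instanton for $(\phi_{0},\psi_{0})$, so $(A^{\mathrm{cone}},0)$ solves \eqref{equ monopole equation} exactly for the flat structure. Substituting it into the genuine equation leaves the error $E=F_{A^{\mathrm{cone}}}\wedge\psi=F_{A^{\mathrm{cone}}}\wedge(\psi-\psi_{0})$, whose size is governed by $|F_{A^{\mathrm{cone}}}|=O(r^{-2})$ and $|\psi-\psi_{0}|=O(r)$.

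Next I would linearize. Coupling \eqref{equ monopole equation} to a Coulomb-type gauge-fixing condition (the $G_{2}$ analogue of $d_{A}^{\star}a=0$ for the pair $(a,u)$) renders the monopole operator elliptic, and its linearization at $(A^{\mathrm{cone}},0)$ is a Dirac-type operator $L$ on $\operatorname{ad}\eta$-valued forms. Since both $A^{\mathrm{cone}}$ and $(\phi_{0},\psi_{0})$ are dilation invariant, on the cone $L$ separates as $L=\partial_{r}+\tfrac{1}{r}\mathcal{B}$, where $\mathcal{B}$ is a self-adjoint first-order elliptic operator on $\mathbb{S}^{6}$, built from the nearly-K\"{a}hler Dirac operator twisted by $A_{0}$ together with the algebraic coupling to the Higgs component. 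The mapping properties of $L$ between weighted H\"{o}lder spaces $C^{k,\alpha}_{\nu}$ (weight $r^{\nu}$, equivalently exponential weight $e^{-\nu t}$ in the cylindrical variable $t=\log(1/r)$) are then dictated by the indicial roots, namely by $\operatorname{spec}(\mathcal{B})$: the operator $L$ is Fredholm precisely for $\nu$ not an indicial root, and $L$ admits a bounded right inverse once $\nu$ is chosen in a spectral gap that makes it surjective.

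With this in hand I would solve the nonlinear equation by a contraction argument. Writing \eqref{equ monopole equation} for the unknown $(a,u)=(A^{\star}-A^{\mathrm{cone}},u)$ as $L(a,u)+Q(a,u)+E=0$, where $Q$ collects the quadratic terms $[a\wedge a]\wedge\psi$ and $\star_{\phi}[a,u]$, I would first improve the approximate solution by solving the indicial equation order by order in $r$, introducing logarithmic factors at the finitely many integer critical weights, so as to reduce $E$ to a residual decaying as fast as desired. Fixing a bounded right inverse $P$ of $L$ at a suitable weight, the remaining problem is the fixed point $(a,u)=-P\bigl(E+Q(a,u)\bigr)$, which the Banach fixed point theorem solves on a small weighted ball after shrinking to $B_{O}(R')$; the estimates follow from the boundedness of $P$ and the quadratic nature of $Q$. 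Elliptic regularity yields smoothness, and the weighted decay translates into the asserted exponential convergence of $A^{\star}$ to the pullback of $A_{0}$ along the links $\{r\}\times\mathbb{S}^{6}$ as $r\to 0$; finally a Weitzenb\"{o}ck/maximum-principle argument on the gauge-fixing component confirms that the solution of the gauge-augmented system solves \eqref{equ monopole equation} itself.

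The crux is the linear theory behind the invertibility of $L$: one must analyze $\operatorname{spec}(\mathcal{B})$ on the nearly-K\"{a}hler $\mathbb{S}^{6}$ precisely enough to locate a non-critical weight and to control, or eliminate, the cokernel — equivalently the obstruction to deforming the cone monopole. The feature that I expect makes the theorem hold with no unobstructedness hypothesis on $A_{0}$ is exactly the extra Higgs degree of freedom $u$: it enlarges the domain of $L$ relative to the pure $G_{2}$-instanton deformation operator and should supply the missing surjectivity. Verifying this, together with the bookkeeping of the critical integer weights where logarithms enter the asymptotic expansion, is where the real work lies.
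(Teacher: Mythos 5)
Your proposal is not the route the paper takes, and as written it contains a genuine gap. The paper's proof involves essentially no cone analysis at all: it changes linear coordinates so that $\phi(O)$ is the standard Euclidean $G_{2}$-form, dilates by $x=\lambda y$ so that the rescaled structure $\widetilde{\phi}=\lambda^{3}\Gamma^{-1,\star}\phi$ satisfies $|\widetilde{\phi}-\widetilde{\phi}(O)|_{C^{5}_{x}[B(\frac{1}{4})]}\leq c_{\phi}/\lambda$, and then, for $\lambda$ large, invokes Theorem 1.13 of \cite{Myself2016} as a black box: that theorem already produces, for any $G_{2}$-structure sufficiently ($\delta_{0}$-)close to Euclidean in $C^{5}$ on the fixed ball $B(\frac{1}{4})$, a monopole asymptotic to the pulled-back HYM connection $A_{0}$ with rate $1-\theta$. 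The only work remaining in the paper is to check that the monopole equation transforms correctly under the dilation --- the scaling identities $\Gamma^{\star}\widetilde{\psi}=\lambda^{4}\psi$ and $\star_{\lambda^{2}g}=\lambda^{5}\star_{g}$, and the rescaling $\lambda\sigma^{\star}$ of the Higgs field --- so that the solution on $B(\frac{1}{4})$ pulls back to a solution on $B(\frac{1}{4\lambda})$ for the original $(\phi,\psi)$. Your outline, by contrast, is a plan to re-prove the cited deformation theorem from scratch.

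That plan founders exactly where you yourself locate ``the real work'': you never establish that the gauge-fixed linearization $L=\partial_{r}+\frac{1}{r}\mathcal{B}$ admits a bounded right inverse on a suitable weighted space. Your key claim --- that the Higgs component enlarges the operator enough to kill the cokernel, so that no unobstructedness hypothesis on $A_{0}$ is needed --- is stated as an expectation (``should supply the missing surjectivity''), not proved; without it there is no right inverse $P$, no contraction, and no theorem. This spectral analysis of the twisted operator on the nearly-K\"ahler $\mathbb{S}^{6}$, equivalently the existence of an admissible weight (the paper's $p\in(-\frac{5}{2},\theta-\frac{5}{2})$ satisfying Definition 2.21 of \cite{Myself2016}), is precisely the content that the paper quotes rather than re-derives. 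Two further points are glossed over: you work on a punctured ball rather than a complete cone, so the Fredholm theory for $L$ requires boundary conditions at $\partial B$ (or a cutoff/extension scheme), which enters both surjectivity and the fixed-point estimates; and your order-by-order improvement of the approximate solution at critical integer weights itself presupposes invertibility of the indicial operator at those rates --- again the missing spectral input. The strategy is viable (it is, in essence, the strategy of \cite{Myself2016}), but until the linear theory is actually supplied, your argument does not prove Theorem \ref{Thm local solution}.
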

\begin{rmk} Not every singular elliptic equation admits a  local solution. For example,  Brezis-Cabr\'e \cite{Brezis} showed that  the equation $\Delta u=-\frac{u^{2}+1}{|x|^2}$ does not admit any solution defined near the origin. In contrast, our theorem says that the  singular $G_{2}-$monopole equation is always locally solvable. In particular, for any smooth $G_{2}-$structure defined near $O$, it yields a $G_{2}-$monopole tangent to the canonical connection on $\mathbb{S}^{6}$ (see \cite{DerekHarland} and \cite{XuFeng}). We hope this could help to construct $G_{2}-$instantons with point singularities on a closed $7-$manifold.
\end{rmk}
\begin{rmk} We expect the local solution to be highly non-unique. There exists a solution whose exponential rate is arbitrarily close to $1$ [see (\ref{equ exponential convergence after  pulling back wrt nablaA0}) and the discussion below (\ref{equ 2})].
\end{rmk}
The monopole equation in $G_{2}-$setting first appeared in \cite{DonSegal}  by Donaldson-Segal. For highly-related  later development  on  $G_{2}$ or other kinds of monopoles (instantons), we refer the interested readers to the work done by Sa Earp-Walpuski \cite{SaEarpWalpuski}, Walpuski \cite{Walpuski1}, Oliveira (\cite{Oliveira}, \cite{Oliveira1}, \cite{Oliveira2}, \cite{Oliveira3}), Foscolo \cite{Foscolo},  Charbonneau-Harland \cite{DerekHarland}, Xu \cite{XuFeng}, and the references therein.
\begin{proof}[Proof of Theorem \ref{Thm local solution}:] Near $O$,  $(\phi,\psi)$ is a small perturbation of $[\phi(O),\psi(O)]$. Using a sophisticated version of the rescaling in page 6-9 of \cite{MySchauder}, we show in the following that Theorem \ref{Thm local solution} is a direct corollary of  Theorem 1.13 in \cite{Myself2016}.

 Let the coordinate vector of $B_{O}(R)\subset \R^{7}$ be $v=\left[\begin{array}{c}v_{1} \\ \vdots \\  v_{7} \\ \end{array}\right]$. All the balls below are centred at $O$.  By Lemma 3.7 in \cite{SW}, there exists a linear transformation $L$ such that under the new coordinate $y=Lv$,   $\phi(O)$ is the Euclidean $G_{2}-$form i.e.
\begin{equation}\label{equ phi 0}\phi(O)=dy^{123}-dy^{145}-dy^{167}-dy^{246}+dy^{257}-dy^{347}-dy^{356}.\end{equation} 
It suffices to work under the $y-$coordinate,  under which  $\phi$ is defined in $B(R_{0})$ for some $R_{0}>0$ depending on $R$ and $L$.  We bring in the bundle $\eta$  as defined over $S^{6}(1)$ (the unit sphere), and then view  it as a bundle over $\R^{7}\setminus O$  pulled back from the natural spherical projection (Remark 2.3 in \cite{Myself2016}). The connection $A_{0}$ is pulled-back to be a $G_{2}-$instanton on $\R^{7}\setminus O$ with respect to $\phi(O)$. 
 
  We write $\phi=\Sigma_{ijk}\phi_{ijk}dy^{ijk}$. 
Let $\Gamma$ denote the map $x=\Gamma (y)=\lambda y$ from $B(\frac{1}{4\lambda})$ to $B(\frac{1}{4})$. To the $x-$coordinate, exactly as in the previous paragraph, we can also  pull back   the bundles $\eta$, $ad\eta$, and $A_{0}$ (denoted the same as in $y-$coordinate). Since they are objects on $\mathbb{S}^{6}$,   they are  invariant under $\Gamma$. Let 
\begin{equation}\label{equ 1}\widetilde{\phi}=\Sigma_{ijk}\Gamma^{-1,\star}(\phi_{ijk})dx^{ijk}= \lambda^{3}\Gamma^{-1,\star}\phi,\ \textrm{where}\ \phi_{ijk}\ \textrm{is the same as above}.
\end{equation}
Let $c_{\phi}$ denote $C\Sigma_{ijk}|\phi_{ijk}-\phi_{ijk}(O)|_{C^{5}_{y}[B(\frac{1}{4\lambda})]}$, where $C$ is a proper universal constant (which could be different in various context), and $C^{5}_{y}$  means  the $C^{5}-$norm in $y-$coordinate. By chain-rule we have for any $x$ that 
$$|\nabla_{x}^{k}(\Gamma^{-1,\star}\phi_{ijk}-\phi_{ijk}(O))|(x)\leq \frac{c_{\phi}}{\lambda^{k}},\ \textrm{for all integer}\ k\in [1,5]\ \textrm{and}\ x\in B(\frac{1}{4}),$$
$\nabla_{x}$ is as  below (\ref{equ exponential convergence before pulling back}). Moreover, 
$$|(\Gamma^{-1,\star}\phi_{ijk}-\phi_{ijk}(O))|(x)=|\phi_{ijk}-\phi_{ijk}(O))|(y)\leq \frac{c_{\phi}}{\lambda}\ \textrm{when}\ x\in B(\frac{1}{4})\ (y\in B(\frac{1}{4\lambda})).$$ Therefore
\begin{equation}\label{equ 2}
|\widetilde{\phi}-\widetilde{\phi}(O)|_{C_{x}^{5}[B(\frac{1}{4})]}\leq \frac{c_{\phi}}{\lambda},\ C^{5}_{x}\  \textrm{means  the}\ C^{5}-\textrm{norm in}\  x\textrm{-coordinate}.
\end{equation}
We actually have a $(\widetilde{\phi},\widetilde{\psi})-$monopole on $B(\frac{1}{4})$ of exponential rate arbitrarily close to $1$. To see this, 
for any $1>\theta>0$, choose $p\in (-\frac{5}{2},\theta-\frac{5}{2})$ such that  the condition in Definition 2.21 of \cite{Myself2016} is satisfied. Let $\delta_{0}$ be small enough with respect to $A_{0}$ and $p$,
Theorem 1.13  in \cite{Myself2016} (and the rate of convergence given by the proof of it) is directly applicable. Therefore, let  $\lambda$ be large enough such that $\frac{c_{\phi}}{\lambda}<\frac{\delta_{0}}{2}$ and $\frac{1}{4\lambda}<\frac{R_{0}}{2}$,  there exists a $(\widetilde{\phi},\widetilde{\psi})-$monopole $(A,\sigma)$ over $B(\frac{1}{4})$ i.e.
\begin{equation}\label{equ monopole equation big ball}
F_{A}\wedge\widetilde{\psi}+\star_{\widetilde{g}}d_{A}\sigma=0\ \textrm{over}\ B(\frac{1}{4}),\ \widetilde{g}\ \textrm{is the metric of}\ \widetilde{\phi}.
\end{equation}
Moreover, let  $\delta_{0}$ be even smaller if necessary, by the proof of Theorem 1.13 in section 5 of \cite{Myself2016} (also see Definition 2.9 and Theorem 5.1 therein),  $A$ is of exponential rate $1-\theta$ and order 3 i.e.
\begin{equation}\label{equ exponential convergence before pulling back}
|x|^{l+1}|\nabla_{x}^{l}(A-A_{0})|(x)\leq |x|^{1-\theta},\ \textrm{for any integer}\ l\in [0,3],
\end{equation}
 where $|x|$ is just the usual norm of $x$, and $\nabla_{x}$ is the ordinary derivative (of the components of $A-A_{0}$) under the natural charts as  in Definition 2.10 of \cite{Myself2016} (of course here $\eta\rightarrow \mathbb{S}^{6}$ and $A_{0}$ might be trivialized by more than $2$ coordinate neighbourhoods, but this does not make any difference). 

Pulling back both sides of (\ref{equ monopole equation big ball}) via $\Gamma$, we obtain 
\begin{equation}\label{equ monopole equation small ball}
F_{A^{\star}}\wedge (\lambda^{4}\psi)+\Gamma^{\star}[\star_{\widetilde{g}}(d_{A}\sigma)]=0\ \textrm{over}\ B(\frac{1}{4\lambda}),\ A^{\star}=\Gamma^{\star}A.
\end{equation}
Using  \begin{equation}\label{equ star g is covariant} \Gamma^{\star}[\star_{\widetilde{g}}(d_{A}\sigma)]=\star_{\Gamma^{\star}\widetilde{g}} \Gamma^{\star}(d_{A}\sigma),\ \Gamma^{\star}\widetilde{g}=\lambda^{2}g,\ \textrm{and}\ \star_{\lambda^{2}g}=\lambda^{5}\star_{g}\ (\textrm{see Remark}\ \ref{rmk linear algebra}),
\end{equation}
 where $g\ \textrm{is the metric of}\ \phi$, we obtain 
\begin{equation}
F_{A^{\star}}\wedge \psi+\star_{g}d_{A^{\star}}(\lambda \sigma^{\star})=0,\ \sigma^{\star}=\Gamma^{\star}\sigma.
\end{equation} 
The pair $(A^{\star},\lambda \sigma^{\star})$ is the monopole we desire. Since  $\Gamma^{\star}A_{0}=A_{0}$ (as a connection, see the paragraph above (\ref{equ 1})), the estimate (\ref{equ exponential convergence before pulling back}) means 
\begin{equation}\label{equ exponential convergence after  pulling back}
|y|^{l+1}|\nabla_{y}^{l}(A^{\star}-A_{0})|(y)\leq \lambda^{1-\theta} |y|^{1-\theta},
\end{equation}
where $l$ is as in (\ref{equ exponential convergence before pulling back}), and $\nabla_{y}$ is as under (\ref{equ exponential convergence before pulling back}) but in $y-$coordinate. Since $A_{0}$ is smooth on $\mathbb{S}^{6}(1)$, we directly verify by (\ref{equ exponential convergence after  pulling back}) that 
\begin{equation}\label{equ exponential convergence after  pulling back wrt nablaA0}
|y|^{l+1}|\nabla^{l}_{A_{0}}(A^{\star}-A_{0})|(y)\leq C\lambda^{1-\theta} |y|^{1-\theta},
\end{equation}
where $C$ is a constant depending only on $A_{0}$. 

The proof of Theorem \ref{Thm local solution} is complete.  
\end{proof}

\begin{rmk}\label{rmk linear algebra} Under a fixed coordinate basis, for any $G_{2}-$structure $\phi$, the components of the co-associative form $\psi$ and the associated metric $g$ only depend on the components of $\phi$. Moreover, the dependence is  via a composition only of power functions, fractions, and polynomials in terms of  the components of $\phi$.   Thus we directly verify (\ref{equ monopole equation small ball}) and (\ref{equ star g is covariant}).
\end{rmk} 
\small

\end{document}